\def \e {{\rm e}}
\def \R{{\mathbb{R}}}
\def \S{{\mathbb{S}}}
\newcommand{\ds}{\displaystyle}
\newcommand{\be}{\begin{equation}}
\newcommand{\ee}{\end{equation}}
\def\nablav{\bm\nabla}
\def\d{\mathrm{d}}
\def\hh {{h}}
\def\vx {{\bm x}}
\def\vy {{\bm y}}
\def\omegav {{\bm\omega}}
\def\vn {{\bm n}}
\def\vK {{\bm K}}
\def\vg {{\bm g}}
\def\vom{{\bm\omega}}
\def\SS{{\mathbb{S}^2}}
\begin{document}


\title*{Simulation of the 3D Radiative Transfer}
\subtitle{with Anisotropic Scattering for Convective Trails}
\author{Olivier Pironneau
 \and  Pierre-Henri Tournier}
\institute{Olivier Pironneau
\at Applied Mathematics, Jacques-Louis Lions Lab, Sorbonne
Universit\'e, 75252 Paris cedex 5, France,
\email{olivier.pironneau@sorbonne-universite.fr} 
\and
Pierre-Henri Tournier
\at Applied Mathematics, Jacques-Louis Lions Lab, Sorbonne
Universit\'e, 75252 Paris cedex 5, France,
\email{Pierre-Henri.Tournier@sorbonne-universite.fr} 
}

\maketitle

\section{Introduction}

Airplane combustion engines generate \texttt{CO}$_2$ (carbon dioxide), a gas which stays in the Earth atmosphere for $\sim$ 500 years before being partially absorbed by the oceans.

Currently used fuels and potential future substitutes like hydrogen, synthetic or agricultural fuel combustion generate 	also  \texttt{NO}$_x$ (Nitrous oxides). Typically a nitrous oxide stays in the atmosphere $\sim$ 100 years.

Carbon dioxide, nitrous oxides, methane, ozone and water vapor are ``green house gases'', (GHG), in the sense that atmospheric  and past climates measurements indicate that they are most likely responsible for the additional heat received by Earth (global warming).

Air travel -- around 100.000 flights/day -- produces 3\% of \texttt{CO}$_2$ (or  12\% of what is produced by all transport vehicles). 
It is still small, but the contribution of airplanes to global warming is expected to rise from today's $0.024W/m^2$ to $0.084W/m^2$ by 2050 \cite{Sausen}.
\\\\
A  cloud similar to a cirrus, called ``contrail (short for condensation trail) may appear in the airplane wake if atmospheric pressure and temperature are on the left side of a threshold  curve, and  further left of that pressure versus temperature curve the contrail will persist \cite{roberto}.  Several studies claim that these clouds have a warming effect perhaps 3 times stronger than the one caused by the combustion gases \cite{lee}. 
\\\\
In the past few years the authors have worked to see if these claims could be validated by a numerical simulation of the fundamental equations of physics for these problems \cite{CBOP}, \cite{DIA}, \cite{FGOP}, \cite{FGOP3}.  we approach the problem from an applied mathematics point of view, with convergence error estimation and computational efficiency in mind.
\\\\
The formation of a contrail has been simulated by solving the Navier-Stokes equations (NSE) with chemistry for the engine exhaust and phase change for the ice formation in the airplane wake (see \cite{roberto}); it is the right approach to understand the mechanisms of the formation of contrails.  Once the cloud is sufficiently developed one may study its effect on the absorption and scattering coefficients of the 
Radiative Transfer Equations (RTE) in the Earth atmosphere (see the data in \cite{libtran}) and then solve the RTE-NSE system.  The numerical simulations of RTE in one dimension is the object of intense research \cite{I3RC}, \cite{libtran}, but mostly without coupling with a temperature equation or NSE.

In \cite{JCP} and \cite{JCP2} we have proposed a numerical algorithm to solve the RTE in 3D based on an integro-differential formulation and iterations using ${\mathcal H}$-matrices to speed-up the computation of the integrals.  In this article we show how the method can be extended to handle an important class of non-isotropic scattering for the atmosphere.

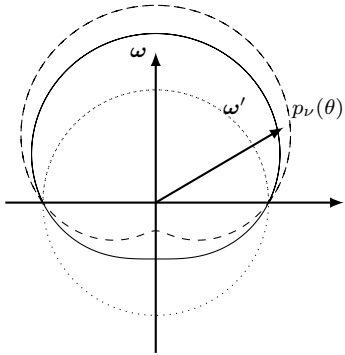
\begin{figure}[htbp]
\begin{center}
\begin{tikzpicture}
  \draw[thick,->,>=latex] (-2,0)--(2.5,0) node[above] {$~$};
   \draw[thick,->,>=latex] (0,0)--(1.7,1.) node[above] {$\vom'~~~~~~ p_\nu(\theta)$};
 \draw[thick,->,>=latex] (0,-2)--(0,2) node[left] {$\vom$};
  \draw[domain=0:540,scale=1.5,samples=500] plot (\x:{1+0.5*sin(\x)});
  \draw[dashed,domain=0:540,scale=1.5,samples=500] plot (\x:{1+0.75*sin(\x)});
  \draw[dotted,domain=0:540,scale=1.5,samples=500] plot (\x:{1+0.*sin(\x)});
\end{tikzpicture}
\caption{This polar plot shows the probability $p(\theta)$ that a photon in the direction $\vom$ scatters in the direction $\vom'$ : $p_\nu(\theta)= 1+ \tfrac12\vom\cdot\vom'=1+ \beta \cos(\theta)$ where $\theta$ is the angle $(\vom,\vom')$.  The solid curve is for $\beta=\tfrac12$, the dashed curve is for $\beta=0.75$ and the dotted circle is the isotropic case is $\beta=0$.}
\label{polar}
\end{center}
\end{figure}

It is known that the preferred direction of scattering in clouds is the initial direction.  It is modelled by an anisotropic probability of scattering (also called phase function) from $\vom$ to $\vom'$,  $p_\nu(\vom,\vom')$ (see Figure \ref{polar}). 

\section{The Radiative Transfer Equations}
When molecular viscosity and wind convection are ignored,  the temperature $T$ in a medium exposed to electromagnetic waves satisfies  the RTE  as explained in \cite{POM}.  It involves a frequency dependent radiation intensity field ${I}_\nu(\vx,\omegav)$  at  position $\vx$ in the physical domain $\Omega$ and in each direction $\omegav$. For all 
$ \{\vx,\omegav,\nu\}\in\Omega\times\S_2\times\R^+$,
\begin{equation}
\begin{aligned}
\label{onea}&
\omegav\cdot\nablav_\vx {I}_\nu+\rho\bar\kappa_\nu a_\nu\left[{I}_\nu-{\frac{1}{4\pi}\int_{\S^2}} p_\nu(\omegav,\omegav'){I}_\nu(\omegav'){\d}\omega'\right]
\\&
\hskip6cm = \rho\bar\kappa_\nu(1-a_\nu) [B_\nu(T)-{I}_\nu],
\\  &
 \int_0^\infty{\int_{\S^2}}\rho\bar\kappa_\nu(1-a_\nu) [B_\nu(T)-{I}_\nu] {\d}\omega {\d}\nu =0,
\end{aligned}
\end{equation}
where $\SS$ is the unit sphere, 
$\ds B_\nu(T)=\frac{2 \hbar \nu^3}{c^2[{\rm e}^\frac{\hbar\nu}{k T}-1]}$ is the Planck function,
  $\hbar,c,k$ are the Planck constant, the speed of light in the medium and  
the Boltzmann constant.
The absorption coefficient $\kappa_\nu:=\rho\bar\kappa_\nu$, where $\rho$ is the medium density,  comes from nuclear physics, but for our purpose it is seen as the percentage of radiation absorbed per unit length. The scattering albedo is  $a_\nu\in(0,1)$ and $p_\nu(\omegav,\omegav')$ is the probability that a ray in direction $\omegav$ scatters in direction $\omegav'$.  

With appropriate boundary conditions, existence of solution  has been established by \cite{POR} and \cite{DIA}. In the lat\textcolor{red}{t}er, convergence  of the following scheme was proved:
\begin{equation}
\begin{aligned}
\label{oneaa}&
\omegav\cdot\nablav I^{n+1}_\nu+\kappa_\nu I^{n+1}_\nu =
{\frac{\kappa_\nu}{4\pi}\int_{\S^2}} p_\nu(\omegav,\omegav')I^{n}_\nu(\omegav'){d}\omega'
+  \kappa_\nu(1-a_\nu) B_\nu(T^{n}),
\\  &
 \int_0^\infty{\int_{\S^2}} \kappa_\nu(1-a_\nu) [B_\nu(T^{n+1})-I^{n+1}_\nu] {d}\omega {d}\nu =0.
\end{aligned}
\end{equation}
By the maximum principle for the first equation and the monotony of $T\mapsto B_\nu(T)$, 
\[
I^n\prec I^{n-1},~ T^n\prec T^{n-1}\quad\implies\quad I^{n+1}\prec  I^n,\qquad \implies T^{n+1}\prec T^n,
\]
where $a\prec b$ means $a(\vx)\le b(\vx)$, for all $\vx$ but not $a(\vx)=b(\vx)$ everywhere.
Hence by choosing $T^0=0$ and $I^0=0$, the positivity of $T^1$ and $I^1$ implies that $T^1\prec T^{0}$ , $I^1\prec I^{0}$ so that , by induction, a strictly increasing sequence is obtained.  Similarly a decreasing sequence is obtained if we manage $T^1\succ T^{0}$, $I^1\succ I^{0}$.  As it was observed in \cite{FGOP3} that the increasing sequences converge faster we will focus on that one only.

\section{Integral Formulation}
Consider an anisotropic scattering density 
\[
p_\nu(\vom,\vom')=1+\beta_\nu\vom\cdot\vom'.
\]
Denote by $\Gamma$ the boundary of $\Omega$.  One must find the radiation  intensity $I_\nu(\vx,\vom)$ at all points  $\vx\in\Omega$, for all directions all $\vom\in\SS$ and all radiation frequencies $\nu\in\R_+$, by solving the radiative transfer equations (RTE):
\begin{align}
&\vom\cdot\nabla_x I_\nu+{\kappa_\nu} I_\nu={\kappa_\nu}(1-a_\nu)\left(B_\nu(T)+ \beta_\nu\vom\cdot  \vK_\nu\right)
+{\kappa_\nu}a_\nu J_\nu\,,
\label{RTa}
\\&
 J_\nu(\vx):=\tfrac1{4\pi}\int_{\SS}I_\nu\d\omega,\quad \vK_\nu(\vx):=\tfrac1{4\pi}\int_{\SS}\vom I_\nu(\vx,\vom)\d\omega\,,
\label{RTaa}
\\
&\int_0^\infty{\kappa_\nu}(1-a_\nu)(J_\nu-B_\nu(T))\d\nu=0\,,
\label{RTb}
 \\ &
I_\nu(\vx,\vom) = R_\nu(\vx,\vom) I_\nu(\vx,\vom-2(\vn\cdot\vom)\vn) + Q_\nu(\vx,\vom),
\cr & \hskip 3cm 
\hbox{ on }\Sigma:=\{(\vx,\vom)\in\Gamma\times\SS~:~\vom\cdot\vn(\vx)<0\,\}.
\label{RTc}
\end{align}
 In  \eqref{RTc}, $Q_\nu$ is the radiation source and $R_\nu$ is the portion of radiation which is reflected by the boundary; $\vn(\vx)$ is the outer normal of $\Gamma$ at $\vx$. $\kappa_\nu>0$ and $a_\nu\in[0,1]$ are the absorption and scattering coefficients; in general they depend on $\nu$ and $\vx$.

The general solution of \eqref{RTa} is
\[
I(\vx,\vom) = I(\vx_\Sigma(\vx,\vom),\vom)\e^{-\int_0^{\tau_{\vx,\vom}}\kappa(\vx-\vom s)\d s}
+
\int_0^{\tau_{\vx,\vom}} \e^{-\int_0^s\kappa(\vx-\vom s')\d s'} S_\nu(\vx-\vom s,\vom)\d s,
\]
where $\tau_{\vx,\vom}$ is the length $|\vx-\vx_{\textcolor{red}{\Sigma}}(\vx,\vom)|$, and ${\cal S}_\nu$ denotes its right-hand side,
\begin{equation}\label{ssource}
{\cal S}_\nu(\vx,\vom) = {\kappa_\nu(\vx)}(1-a_\nu(\vx))\left(B_\nu(T(\vx))+ \beta_\nu(\vx)\vom\cdot  \vK_\nu(\vx)\right)
+{\kappa_\nu(\vx)}a_\nu(\vx) J_\nu(\vx)\,.
\end{equation}
\subsection{Solution in Absence of Reflective Boundaries}
Let us denote $\int_{[\vx,\vy]}\kappa := \int_0^{|\vy-\vx|} \kappa(\vx+s(\vy-\vx))\d s$.  When $R_\nu=0$, the following holds:
\begin{proposition}\label{prop2}
\begin{align}&
J_\nu(\vx) = S^E_\nu(\vx) + {\mathcal J}[{\cal S}_\nu](\vx),
\\& 
S^E_{\nu}(\vx) =
 \tfrac{1}{4\pi}\int_\Gamma Q_\nu(\vy,\tfrac{\vy-\vx}{|\vy-\vx|})
 \frac{[(\vy-\vx)\cdot\vn(\vy)]_-}{|\vy-\vx|^3}\e^{-\int_{[\vx,\vy]}\kappa}\d\Gamma(\vy),
\label{SE1}
\\& 
{\mathcal J}[{\cal S_\nu}](\vx) = 
 \tfrac1{4\pi}\int_{\Omega} \frac{\e^{-\int_{[\vx,\vy]}\kappa}}{|\vy-\vx|^2}
  {\cal S}_\nu(\vy,\frac{\vy-\vx}{|\vy-\vx|})\d\vy
  \label{SE23}.
\end{align}
\end{proposition}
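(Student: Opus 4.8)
The plan is to substitute the characteristic formula for $I_\nu$ displayed just above into $J_\nu(\vx)=\tfrac1{4\pi}\int_{\SS}I_\nu(\vx,\vom)\,\d\omega$ and to reduce the two resulting angular integrals to an integral over $\Gamma$ and an integral over $\Omega$ by two elementary changes of variable. Since $R_\nu=0$, the inflow condition \eqref{RTc} reduces to $I_\nu(\vx_\Sigma,\vom)=Q_\nu(\vx_\Sigma,\vom)$, so the boundary part of $I_\nu$ is $Q_\nu(\vx_\Sigma(\vx,\vom),\vom)\,\e^{-\int_0^{\tau_{\vx,\vom}}\kappa(\vx-\vom s)\,\d s}$ and the remaining part is the ray integral of the source ${\cal S}_\nu$. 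I would run the argument under the mild assumption that $\Omega$ is convex, so that every backward ray $s\mapsto\vx-\vom s$ from an interior point meets $\Gamma$ exactly once and the two maps used below are bijective up to null sets.

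For the volume term I would pass to spherical coordinates centred at $\vx$. Writing $\vy=\vx-\vom s$ one has $s=|\vy-\vx|$ and $\vom=\tfrac{\vx-\vy}{|\vx-\vy|}$, and as $(\vom,s)$ runs over $\SS\times(0,\tau_{\vx,\vom})$ the point $\vy$ covers $\Omega$ once with Jacobian $\d\vy=s^{2}\,\d s\,\d\omega=|\vy-\vx|^{2}\,\d s\,\d\omega$. The inner exponent $\int_0^{s}\kappa(\vx-\vom s')\,\d s'$ is precisely the segment integral $\int_{[\vx,\vy]}\kappa$, so this term becomes the volume integral ${\mathcal J}[{\cal S}_\nu](\vx)$ of \eqref{SE23}, the weight $|\vy-\vx|^{-2}$ being the reciprocal of the Jacobian and the unit direction in ${\cal S}_\nu$ the one carried along the ray.

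For the boundary term I would invoke the classical relation between the solid angle $\d\omega$ subtended at $\vx$ by a surface element and that element,
\[
\d\omega=\frac{|(\vy-\vx)\cdot\vn(\vy)|}{|\vy-\vx|^{3}}\,\d\Gamma(\vy),
\]
where $|(\vy-\vx)\cdot\vn(\vy)|/|\vy-\vx|$ is the cosine of the obliquity of $\Gamma$ and $|\vy-\vx|^{-2}$ the inverse square of the distance. Under the map $\vom\mapsto\vy=\vx_\Sigma(\vx,\vom)$ one has $\tau_{\vx,\vom}=|\vy-\vx|$ and $\int_0^{\tau_{\vx,\vom}}\kappa=\int_{[\vx,\vy]}\kappa$; substituting yields $S^E_\nu(\vx)$ of \eqref{SE1}, the restriction to the inflow part $\vom\cdot\vn(\vy)<0$ being recorded by the bracket $[\,\cdot\,]_-$. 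Adding the two transformed contributions gives $J_\nu=S^E_\nu+{\mathcal J}[{\cal S}_\nu]$.

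The analysis is light and the delicate points are geometric. First, one must verify that $\vom\mapsto\vx_\Sigma(\vx,\vom)$ is a bijection onto the portion of $\Gamma$ visible from $\vx$, so that no surface patch is double-counted or omitted; this is exactly where convexity is used and where a non-convex $\Omega$ would require a more careful, visibility-dependent formulation. Second, one must establish the solid-angle/area Jacobian above together with its orientation, since this single step fixes both the cosine weight and the sign carried by $[\,\cdot\,]_-$. I expect this Jacobian to be the main (though still routine) obstacle; once it and the spherical-coordinate Jacobian are in hand, the identity is immediate.
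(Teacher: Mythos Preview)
Your proposal is correct and follows exactly the route the paper intends: the paper states Proposition~\ref{prop2} without a written-out proof, but immediately afterwards it carries out the identical computation for $\vK_\nu$ in \eqref{gen1}, passing from the angular integrals of the characteristic formula to a surface integral over $\Gamma$ (via the solid-angle Jacobian) and a volume integral over $\Omega$ (via $\vy=\vx-\vom s$, $\d\vy=s^2\d s\,\d\omega$). Your derivation is that same change of variables with the $\vom$ weight removed, and your remarks on convexity/visibility and the sign carried by $[\,\cdot\,]_-$ are apt caveats the paper leaves implicit.
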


  Averaging \eqref{ssource} on $\SS$ after multiplication by $\vom$ leads to
\begin{equation}\label{gen1}
\begin{aligned}
\vK_\nu(\vx)&:=\tfrac1{4\pi}\int_\SS \vom I(\vx,\vom)\d\omega
=\vK^E_\nu(\vx) + {\mathcal \vK}[{\cal S}_\nu](\vx)\quad \text{ with}
\cr
\vK^E_\nu(\vx)&:=\tfrac1{4\pi}\int_\SS \vom I(\vx_\Sigma(\vx,\vom),\vom)\e^{-\int_0^{\tau_{\vx,\vom}}\kappa(\vx-\vom s)\d s}\d\omega
\cr&
 =
 \tfrac{1}{4\pi}\int_\Gamma (\vy-\vx)Q_\nu(\vy,\tfrac{\vy-\vx}{|\vy-\vx|})
 \frac{[(\vy-\vx)\cdot\vn(\vy)]_-}{|\vy-\vx|^4}\e^{-\int_{[\vx,\vy]}\kappa}\d\Gamma(\vy),
\cr
{\mathcal \vK}[{\cal S}_\nu](\vx)&:=
\tfrac1{4\pi}\int_\SS\int_0^{\tau_{\vx,\vom}} \vom\e^{-\int_0^s\kappa(\vx-\vom s')\d s'} {\cal S}_\nu(\vx-\vom s,\vom)\d s\d\omega
\\&
=
 \tfrac1{4\pi}\int_{\Omega} (\vy-\vx){\cal S}_\nu(\vy,\tfrac{\vy-\vx}{|\vy-\vx|})\frac{\e^{-\int_{[\vx,\vy]}\kappa}}{|\vy-\vx|^3}\d\vy.
 \end{aligned}
\end{equation}
\subsection{Iterative Method}
In \cite{JCP} it was shown that, in absence of $\vK_\nu$, i.e. $\beta_\nu=0$, the following is monotone and convergent. Its extension to $\beta_\nu>0$ is:
\subsubsection*{Algorithm}
\begin{enumerate}
\item Initialize  $J_\nu$, $\vK_\nu$ and $T$ (by zero, for instance).
\item Compute ${\cal S}_\nu$ by \eqref{ssource}.
\item Update $J_\nu$ by \eqref{gen1},\eqref{SE1}, \eqref{SE23} and $\vK_\nu$ by \eqref{gen1}.
\item Update $T$ by solving \eqref{RTb}.
\end{enumerate} 
\section{Extension to Reflective Conditions (RC)}
As in \cite{JCP2}\textcolor{red}{,} consider  boundary condition \eqref{RTc}.
Proposition \ref{prop2} can be extended to case with RC and \textcolor{red}{in} the case of non-multiple reflection, it becomes:
\begin{proposition}\label{prop3}
The same iterations are proposed with
\begin{equation}
J_\nu(\vx) = S^E_{\nu,1}(\vx) + S^E_{\nu,2}(\vx) + \bar{\mathcal J}[{\cal S}_\nu](\vx),
\quad
\vK_\nu(\vx) = \vK^E_{\nu,1}(\vx) + \vK^E_{\nu,2}(\vx) + \bar{\mathcal K}[{\cal S}_\nu](\vx),
\end{equation}
\begin{equation}
\begin{aligned}
& \label{SEE1}
S^E_{\nu,1}(\vx) =
 \tfrac{1}{4\pi}\int_\Gamma Q_\nu(\vy,\tfrac{\vy-\vx}{|\vy-\vx|})
 \frac{[(\vy-\vx)\cdot\vn(\vy)]_-}{|\vy-\vx|^3}\e^{-\int_{[\vx,\vy]}\kappa}\d\Gamma(\vy),
\\&  
S^E_{\nu,2}(\vx) =
 \sum_{n=1}^M\tfrac{1}{4\pi}\int_\Gamma  R_\nu(\vx'_n,\tfrac{\vx-\vx'_n}{|\vx-\vx'_n|})Q_\nu(\vy,\tfrac{\vx'_n-\vy}{|\vx'_n-\vy|})
 \cr &
 \hskip 3cm
 \times\frac{[(\vx'_n-\vy)\cdot\vn(\vy)]_- \e^{-\int_{[\vx,\vx'_n]\cup[\vx'_n,\vy]}\kappa}}{|\vx'_n-\vy|\;(|\vx-\vx'_n|+|\vx'_n-\vy|)^2}
 \d\Gamma(\vy).
 \\& 
\bar{\mathcal J}[{\cal S}_\nu](\vx) = 
 \tfrac1{4\pi}\int_{\Omega} \left[\frac{\e^{-\int_{[\vx,\vy]}\kappa}}{|\vy-\vx|^2}
 \right.
\cr &
 \hskip 2cm
\left.  +\sum_{n=1}^M \frac{\e^{-\int_{[\vx,\vx'_n]\cup[\vx'_n,\vy]}\kappa}}{(|\vx-\vx'_n|+|\vx'_n-\vy|)^2}
 R_\nu(\vx'_n,\tfrac{\vx-\vx'_n}{|\vx-\vx'_n|}) \right]{\cal S}_\nu(\vy,\tfrac{\vy-\vx}{|\vy-\vx|})\d\vy.
\end{aligned}
\end{equation}
\begin{equation}
\begin{aligned}& \label{REE1}
\vK^E_{\nu,1}(\vx) =
 \tfrac{1}{4\pi}\int_\Gamma (\vy-\vx )Q_\nu(\vy,\tfrac{\vy-\vx}{|\vy-\vx|})
 \frac{[(\vy-\vx)\cdot\vn(\vy)]_-}{|\vy-\vx|^3}\e^{-\int_{[\vx,\vy]}\kappa}\d\Gamma(\vy),
\\&  
\vK^E_{\nu,2}(\vx) =
 \sum_{n=1}^M\tfrac{1}{4\pi}\int_\Gamma  (\vx-\vx'_n)R_\nu(\vx'_n,\tfrac{\vx-\vx'_n}{|\vx-\vx'_n|})Q_\nu(\vy,\tfrac{\vx'_n-\vy}{|\vx'_n-\vy|})
 \cr &
 \hskip 3cm
\times \frac{[(\vx'_n-\vy)\cdot\vn(\vy)]_- \e^{-\int_{[\vx,\vx'_n]\cup[\vx'_n,\vy]}\kappa}}{|\vx'_n-\vy|\;(|\vx-\vx'_n|+|\vx'_n-\vy|)^2}
 \d\Gamma(\vy),
 \\&  
\bar{\mathcal \vK}[{\cal S}_\nu](\vx) = 
 \tfrac1{4\pi}\int_{\Omega} \left[\frac{\vy-\vx}{|\vy-\vx|^3}\e^{-\int_{[\vx,\vy]}\kappa}
\right.\\& \left.
\hskip1cm +\sum_{n=1}^M (\vx-\vx'_n)\frac{\e^{-\int_{[\vx,\vx'_n]\cup[\vx'_n,\vy]}\kappa}}{(|\vx-\vx'_n|+|\vx'_n-\vy|)^2}
 R_\nu(\vx'_n,\tfrac{\vx-\vx'_n}{|\vx-\vx'_n|}) \right]{\cal S}_\nu(\vy,\tfrac{\vy-\vx}{|\vy-\vx|})\d\vy.
\end{aligned}
\end{equation}
where $M$ is the number of RC boundaries and $\vx'_n$ is the point of reflection on the boundary of the ray going from $\vx$ to $\vy$ via $\vx'_n$.
\end{proposition}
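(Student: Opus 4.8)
The plan is to start from the characteristic (integral-along-rays) representation of $I_\nu$ displayed just before \eqref{ssource} and to notice that the \emph{only} ingredient which changes when $R_\nu\neq 0$ is the incoming boundary value $I_\nu(\vx_\Sigma(\vx,\vom),\vom)$. In the non-reflective setting of Proposition \ref{prop2} this value is simply $Q_\nu$; under \eqref{RTc} it becomes $Q_\nu(\vx_\Sigma,\vom)+R_\nu(\vx_\Sigma,\vom)\,I_\nu(\vx_\Sigma,\vom-2(\vn\cdot\vom)\vn)$. The hypothesis of non-multiple reflection means the reflected ray, once it leaves $\vx_\Sigma$, travels straight until it meets $\Gamma$ again, where its intensity is taken to be $Q_\nu$ alone. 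Hence every backward ray from $\vx$ contributes through at most two boundary points joined by two straight legs, which is exactly the two-term split $S^E_{\nu,1}+S^E_{\nu,2}$ (and the analogous split for $\vK_\nu$) in the statement.

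First I would fix a direction $\vom$ whose backward ray meets the $n$-th reflecting face at $\vx'_n=\vx'_n(\vx,\vom)$, determined by the reflection law $\vom-2(\vn\cdot\vom)\vn$ pointing from $\vx'_n$ towards the second boundary point $\vy$. Carrying the attenuation through both legs produces the factor $\e^{-\int_{[\vx,\vx'_n]\cup[\vx'_n,\vy]}\kappa}$, the reflective kernel contributes the weight $R_\nu(\vx'_n,\tfrac{\vx-\vx'_n}{|\vx-\vx'_n|})$ evaluated at the bounce in the outgoing direction, and the source is read off as $Q_\nu(\vy,\tfrac{\vx'_n-\vy}{|\vx'_n-\vy|})$ along the last leg. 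This reproduces every ingredient of $S^E_{\nu,2}$ in \eqref{SEE1} except the geometric weight, which must come from the change of variables.

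The key step, and the main obstacle, is precisely this change of variables $\d\omega\mapsto\d\Gamma(\vy)$ for the reflected leg, where now $\vx'_n=\vx'_n(\vx,\vy)$ is fixed by the reflection law on the $n$-th face. Here I would \emph{unfold} the reflection: replacing $\vx$ by its mirror image $\vx^\ast$ across that face turns the broken ray $\vx\to\vx'_n\to\vy$ into the straight segment $\vx^\ast\to\vy$ of length $L:=|\vx-\vx'_n|+|\vx'_n-\vy|$, since reflection is an isometry. Solid angle being invariant under this isometry, the direct identity $\d\omega=|(\vy-\vx)\cdot\vn(\vy)|\,|\vy-\vx|^{-3}\,\d\Gamma(\vy)$ used in Proposition \ref{prop2} transfers verbatim with $\vx$ replaced by $\vx^\ast$; using $|\vy-\vx^\ast|=L$ together with the fact that the incidence angle at $\vy$ is that of the last leg, so that $|(\vy-\vx^\ast)\cdot\vn(\vy)|/|\vy-\vx^\ast|=|(\vx'_n-\vy)\cdot\vn(\vy)|/|\vx'_n-\vy|$, yields exactly the weight $[(\vx'_n-\vy)\cdot\vn(\vy)]_-\big/\big(|\vx'_n-\vy|\,L^2\big)$. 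The sum over $n=1,\dots,M$ accounts for the several reflecting faces and the negative-part bracket selects the admissible incoming contributions.

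Finally I would assemble the pieces. Averaging the boundary term of the characteristic solution over $\SS$ gives $S^E_{\nu,1}$ (the unreflected ray, identical to \eqref{SE1}) plus $S^E_{\nu,2}$, while integrating the source ${\cal S}_\nu$ along the two legs--the straight portion giving the kernel $\e^{-\int_{[\vx,\vy]}\kappa}/|\vy-\vx|^2$ of \eqref{SE23} and the reflected portion giving the $R_\nu$-weighted kernel with denominator $L^2$--produces $\bar{\mathcal J}[{\cal S}_\nu]$. The formulas \eqref{REE1} for $\vK_\nu$ follow by the same computation after multiplying the integrand by the propagation direction $\vom$ before averaging, which merely inserts the corresponding directional factor and raises the relevant power of the distance by one, exactly as in passing from \eqref{SE1}--\eqref{SE23} to \eqref{gen1}. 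The only genuinely new ingredient is the unfolding Jacobian; everything else is the bookkeeping of Proposition \ref{prop2} applied once on each leg.
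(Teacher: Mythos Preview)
The paper does not supply a proof of Proposition~\ref{prop3}; it merely states the formulas, pointing to \cite{JCP2} for the reflective setting, and moves directly to discretization. Your argument---iterating the characteristic representation once through the boundary reflection and computing the Jacobians $\d\omega\to\d\Gamma(\vy)$ and $\d\omega\,\d s\to\d\vy$ by unfolding across the reflecting face (method of images)---is the natural derivation and recovers the stated weights correctly; in particular your identifications $|\vy-\vx^\ast|=L$ and $(\vy-\vx^\ast)\cdot\vn(\vy)/|\vy-\vx^\ast|=(\vy-\vx'_n)\cdot\vn(\vy)/|\vy-\vx'_n|$ are exactly what produce the denominator $|\vx'_n-\vy|\,L^2$ in \eqref{SEE1}, and the same unfolding gives $\d\vy=L^2\,\d L\,\d\omega$ for the reflected volume term in $\bar{\mathcal J}$. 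The only implicit assumption worth flagging is that the mirror-image step requires each reflecting face to be planar, which matches the geometry the paper actually uses (the four vertical walls bounding the computational box) but is not spelled out in either place.
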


All equations are discretized using a $P^1$ finite element framework.
Some integrals have a singular kernel, so a careful quadrature should be used. 

In \cite{JCP}, \cite{JCP2} a strategy is explained to compute all the integrals as a matrix vector product where the matrices are   hierarchical compressed ${\cal H}$-matrices~\cite{beb},\cite{beben},\cite{hack}.

In the grey case ($\kappa_\nu$ independent of $\nu$) we need 5+M matrices. This could be taxing in computer memory, but not in computing time because the core of the method is $N\ln N$ where $N$ is the number of finite element vertices.
\begin{figure}[htbp]
\begin{center}
\includegraphics[width=5cm]{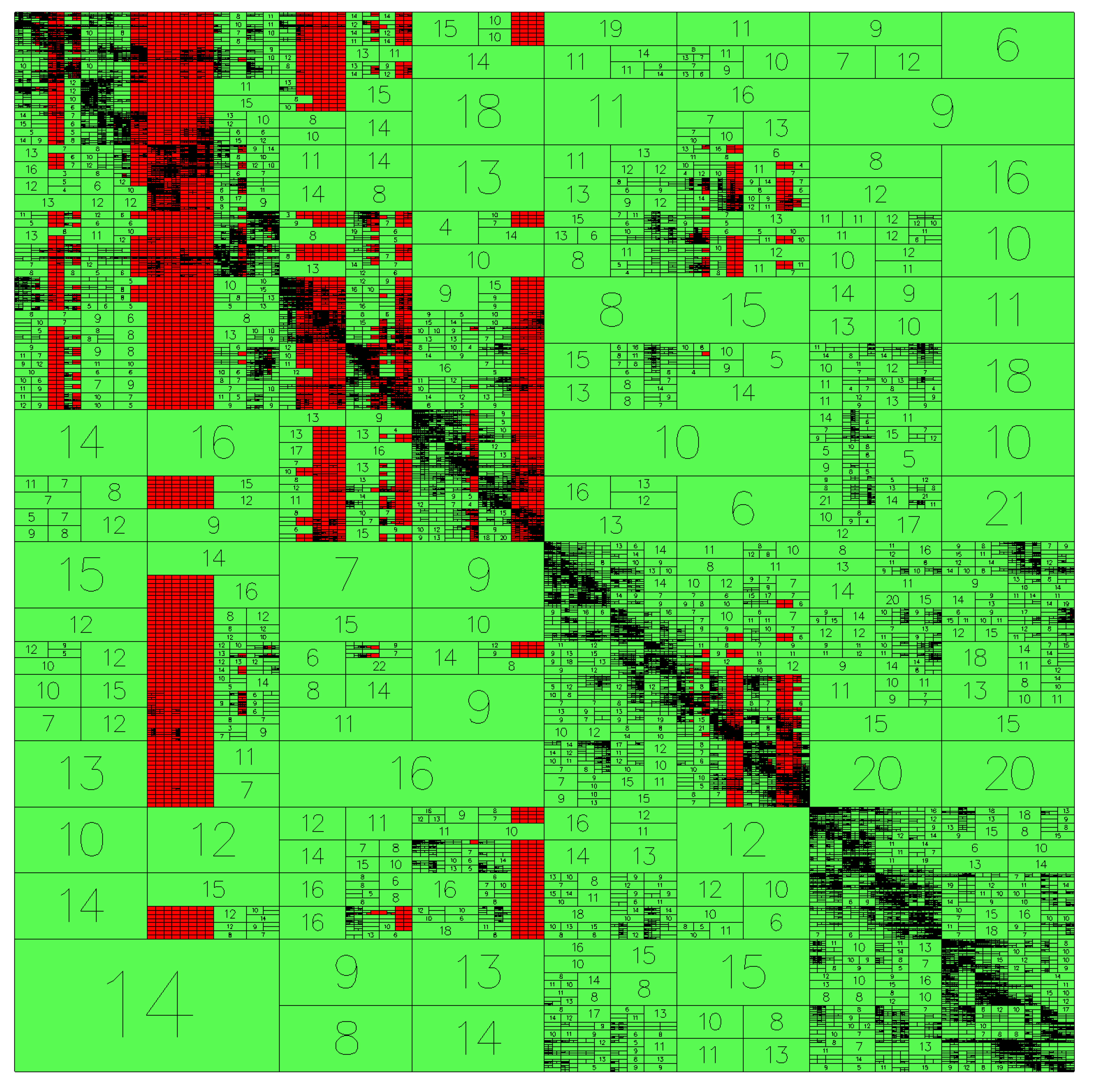}
\includegraphics[width=5cm]{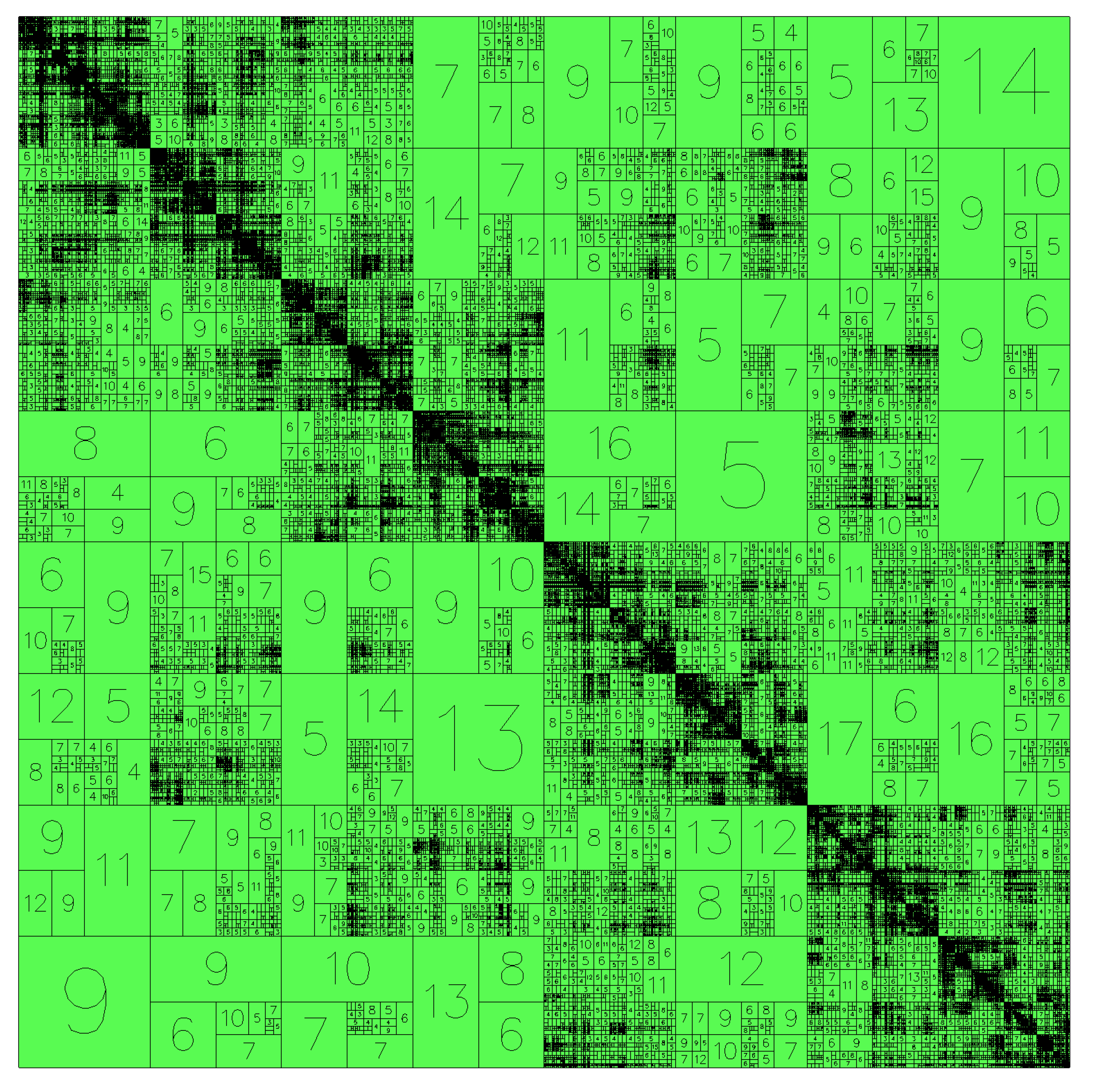}
\caption{Visual representations of the ${\cal H}$-matrices surface-to-volume on the left and volume-to-volume on the right, for the computation of the temperature in the Chamonix valley.
The level of compression is shown by light green-to-red and the numbers correspond to the rank of the truncated SVD approximation of each block.}
\label{matrix}
\end{center}
\end{figure}

In the non grey case we need P(5+M) matrices where $P$ is the number of values taken by the numerical approximation of $\kappa_\nu$.

\section{The Stratified One Dimensional Approximation}\label{sec3}

 Let us consider the Earth's atmosphere submitted to black body radiations from the sun at temperature $T_S$ and earth at $T_E$. The atmosphere is thin and the Sun is far. As a first approximation the ground is locally flat and has a negligible relief. Then all quantities are only function of the altitude $z$ and independent of $x$ and $y$. This, so called, stratified approximation of the RTE is  (see \cite{CBOP})
\begin{align}
{}&(\mu\partial_\tau +\kappa_\nu)I_\nu(\tau,\mu)=\kappa_\nu(1-a_\nu)B_\nu(T(\tau))
+ \tfrac12\kappa_\nu a_\nu\int_{-1}^1 p_\nu(\mu,\mu')I_\nu(\tau,\mu')\d\mu'\,,\label{RTSlab1}
\\
&I_\nu(0,\mu)=r_\nu I(0,-\mu)+\mu Q^+_\nu\,,\quad I_\nu(Z,-\mu)=\mu Q^-_\nu\,,\qquad 0<\mu<1\,,\label{RTSlab2}
\\
&\int_0^\infty\kappa_\nu(1-a_\nu)B_\nu(T(\tau))\d\nu=\int_0^\infty\kappa_\nu(1-a_\nu) J_\nu(\tau)\d\nu,\label{RTSlab3}
\end{align}
with    
\begin{equation}\label{jeq}
  Q^+_\nu=Q^EB_\nu(T_E),
  \quad
  Q^-_\nu=Q^SB_\nu(T_S),
  \quad
  J_\nu(\tau):=\tfrac12\int_{-1}^1I_\nu(\tau,\mu)\d\mu,
\,
\end{equation}
where $\mu$ is the cosine of $\vom$ with the vertical direction and $\tau$ is the optical thickness:
\[
\tau = \int_0^z\rho(z')\d z'.
\]   

\subsection{RTE with Non-Isotropic and Rayleigh Phase Function}\label{sec:5}

The scattering function is
$
p_\nu(\mu,\mu')=1+\beta_\nu\mu' 
$
in a cloud between altitude $Z_m$ and $Z_M$ and at high altitude $z>Z_M$ it is
the Rayleigh phase function 
$
p_\nu(\mu,\mu')=\tfrac3{16}(3-\mu^2)+\tfrac3{16}(3\mu^2-1)\mu'^2
$. It is combine\textcolor{red}{d} into one formula with two given altitude\textcolor{red}{s} and possibly frequency dependent functions $b_\nu(\tau)$ and $\beta_\nu(\tau)$ in $[0,1]$, 
\begin{equation}
p_\nu(\mu,\mu')=b_\nu+\beta_\nu\mu' + \tfrac3{16}(1-b_\nu)(3-\mu^2+(3\mu^2-1)\mu'^2).
\end{equation}
 Observe that $p_\nu(\mu,\mu')\ge0$ and $
\tfrac12\int_{-1}^1p_\nu(\mu,\mu')\d\mu'=1$ .
Keeping \eqref{RTSlab3} as the defining equation for $T$, given $I$, the problem becomes
\begin{equation}\label{RTSlab2R}
\left\{
\begin{aligned}
&(\mu\partial_\tau +\kappa_\nu)I_\nu(\tau,\mu)={\cal S}_\nu(\tau)
:=   {\cal R}_\nu(\tau) + {\cal P}_\nu(\tau)\mu^2\,,
\\
&I_\nu(0,\mu)=r_\nu I(0,-\mu)+\mu Q^+_\nu\,,\quad I_\nu(Z,-\mu)=\mu Q^-_\nu\,,\qquad 0<\mu<1\,,
\end{aligned}
\right.
\end{equation}
with 
\begin{equation}
\label{DefPRS}
\begin{aligned}
 K_\nu(\tau):=&\tfrac12\int_{-1}^1\mu I_\nu(\tau,\mu)\d\mu\, ,
 \qquad L_\nu(\tau)=\tfrac12\int_{-1}^1\mu^2I_\nu\d\mu\, ,
\\
{\cal R}_\nu(\tau) :=& \kappa_\nu a_\nu\left(  (\tfrac7{16}b+\tfrac9{16}) J_\nu(\tau)+\beta{K_\nu}(\tau)- \tfrac3{16}(1-b)L_\nu(\tau) \right) +\kappa_\nu(1-a_\nu)B_\nu(T(\tau))\,,
\\
{\cal P}_\nu(\tau):=& \tfrac3{16}(1-b)(-J_\nu(\tau)+3{L_\nu}(\tau))
\end{aligned}
\end{equation}
and \eqref{RTSlab3}. 

\subsubsection*{Algorithm}
\begin{enumerate}
\item
Initialize $J^0_\nu(\tau,\mu)=0$, $K^0_\nu(\tau,\mu)=0$, $L^0_\nu(\tau,\mu)=0$ and $T^0(\tau)=0$.  
\item Compute $I^{n+1}$ by \eqref{RTSlab2R} with ${\cal S}_\nu^n$.
\item Update $J^{n+1}_\nu(\tau,\mu)$, $K^{n+1}_\nu(\tau,\mu)$, $L^{n+1}_\nu(\tau,\mu)$ by \eqref{DefPRS}, then  ${\cal R}_\nu(\tau)$ and ${\cal P}_\nu(\tau)$.
\item Then, compute $T^{n+1}$ by solving \eqref{RTSlab3}.
\end{enumerate}

\subsection{Implementation}
Recall the definition of the exponential integrals 
\begin{equation}\label{expintegr}
E_p(X):=\int_0^1e^{-X/\mu}\mu^{p-2}\d\mu\,,\qquad X>0\,.
\end{equation}
For $i=3,4,5$, let%
\begin{equation}\label{source}
\begin{aligned}
S_i(\nu,\tau) &:= \tfrac12 E_i(\kappa_\nu\tau) Q^+(\tau) + \tfrac{(-1)^{i-1}}2( E_{i}(\kappa_\nu(Z-\tau))+ r_\nu E_{i}(\kappa_\nu(Z+\tau)))Q^-(\tau),
\\ 
F_i(\tau,t) &:= \tfrac12 E_3(\kappa_\nu\vert \tau-t\vert )+\tfrac{r_\nu}2 E_3(\kappa_\nu\vert \tau+t\vert ).
\end{aligned}
\end{equation}
\begin{proposition}

With \eqref{DefPRS} and \eqref{source} the quantities needed by the algorithm above are given by
\begin{equation}
\label{algo:two}
\begin{aligned}
 J^{n+1}_\nu(\tau)=&S_3(\nu,\tau) +\tfrac12\int_0^Z F_1(\tau,t ){\cal R}_\nu^n(\tau)\d t
 +\tfrac12\int_0^Z F_3(\tau,t ){\cal P}_\nu^n(\tau)\d t\,,
\\
{K^{n+1}_\nu}(\tau)=&S_4(\nu,\tau)
+\tfrac12\int_0^\tau F_2(\tau,t ){\cal R}_\nu^n(t)\d t
+\tfrac12\int_0^\tau F_4(\tau,t ){\cal P}_\nu^n(t)\d t
\\&
-\tfrac12\int_\tau^Z F_2(\tau,t ){\cal R}_\nu^n(t)\d t
-\tfrac12\int_\tau^Z F_4(\tau,t ){\cal P}_\nu^n(t)\d t\,,
\\
{L^{n+1}_\nu}(\tau)=&S_5(\nu,\tau)
+\tfrac12\int_0^Z F_3(\tau,t ){\cal R}_\nu^n(\tau)\d t
+\tfrac12\int_0^Z F_5(\tau,t ){\cal P}_\nu^n(\tau)\d t\,.
\end{aligned}
\end{equation}
\end{proposition}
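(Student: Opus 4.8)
The plan is to solve the transport equation \eqref{RTSlab2R} exactly along its characteristics at each fixed $\mu$, then form the three velocity moments $J_\nu,K_\nu,L_\nu$ of \eqref{DefPRS} and recognise every resulting $\mu$-integral as one of the exponential integrals of \eqref{expintegr}. Since $\mathcal{R}^n_\nu$ and $\mathcal{P}^n_\nu$ are data inherited from the previous iterate, \eqref{RTSlab2R} is just a linear first–order ODE in $\tau$ parametrised by $\mu$, so the proposition is a pure representation formula and no fixed–point argument is needed.

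First I would integrate the ODE. For $\mu>0$ the integrating factor $\e^{\kappa_\nu\tau/\mu}$ together with the inflow datum at $\tau=0$ yields
\[
I_\nu(\tau,\mu)=I_\nu(0,\mu)\,\e^{-\kappa_\nu\tau/\mu}+\tfrac1\mu\int_0^\tau \e^{-\kappa_\nu(\tau-t)/\mu}\,{\cal S}_\nu(t)\,\d t,
\]
and for $\mu<0$, writing $\mu=-m$ with $0<m<1$, the datum at $\tau=Z$ gives the analogue with upper limit $Z$ and kernel $\e^{-\kappa_\nu(t-\tau)/m}$. Feeding in the boundary conditions of \eqref{RTSlab2R}, the homogeneous part produces three elementary rays: the emergent ground source $\mu Q^+_\nu\,\e^{-\kappa_\nu\tau/\mu}$, the descending top source $m Q^-_\nu\,\e^{-\kappa_\nu(Z-\tau)/m}$, and — under the non–multiple–reflection hypothesis — its single ground reflection $r_\nu m Q^-_\nu\,\e^{-\kappa_\nu(Z+\tau)/m}$.

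Next I would substitute ${\cal S}_\nu={\cal R}_\nu+{\cal P}_\nu\mu^2$, split each $\int_{-1}^1$ into an up-going half $\int_0^1\d\mu$ and a down-going half $\int_0^1\d m$, and exchange the $\mu$– and $t$–integrations by Fubini. Weighting the three boundary rays by $\mu^k$ (with $k=0,1,2$ for $J,K,L$) and integrating collapses them to the $E_i$ of \eqref{source}: because the boundary data carry one extra power of $\mu$, the index is $i=k+3$, which is exactly why $J,K,L$ use $S_3,S_4,S_5$, and the $(-1)^{i-1}$ factor there records the $(-m)^k$ sign of the down-going weight. For the volume part, the inner integrals are $\int_0^1\mu^{k-1}\e^{-\kappa_\nu|\tau-t|/\mu}\d\mu=E_{k+1}$ for the ${\cal R}_\nu$ piece and $\int_0^1\mu^{k+1}\e^{-\kappa_\nu|\tau-t|/\mu}\d\mu=E_{k+3}$ for the ${\cal P}_\nu$ piece; this pairs ${\cal R}_\nu$ with $F_1,F_2,F_3$ and ${\cal P}_\nu$ with $F_3,F_4,F_5$, while the single ground reflection of the volume source supplies the $\tfrac{r_\nu}2E_i(\kappa_\nu|\tau+t|)$ halves of $F_i$.

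The delicate bookkeeping — and the \emph{main obstacle} — is the odd moment $K_\nu$: since its weight carries one power of $\mu$, the up-going half (source below the level $\tau$, $\int_0^\tau$) and the down-going half (source above, $\int_\tau^Z$) enter with opposite signs, which is precisely the origin of the $+\int_0^\tau-\int_\tau^Z$ splitting in the middle line of \eqref{algo:two}; for the even moments $J_\nu$ and $L_\nu$ the two halves add and recombine, through $|\tau-t|$, into a single $\int_0^Z$. I would finally check that $r_\nu$ enters only linearly (consistent with neglecting multiple reflections) and that every argument $\kappa_\nu|\tau\pm t|$ is nonnegative, so that \eqref{expintegr} indeed applies. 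Collecting the emergent, top, reflected and volume contributions then reproduces \eqref{algo:two}.
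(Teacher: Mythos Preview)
Your proposal is correct and follows essentially the same route as the paper's proof: solve \eqref{RTSlab2R} by characteristics to obtain the explicit formula \eqref{IntForm}, multiply by $\mu^k$ for $k=0,1,2$, split $\int_{-1}^1$ into the up- and down-going halves, and identify every $\mu$-integral with an $E_p$ from \eqref{expintegr}. The only difference is that the paper sets $r_\nu=0$ for clarity and defers the reflected contributions to \cite{FGOP}, whereas you keep $r_\nu$ throughout and correctly trace the single ground reflection into the $\tfrac{r_\nu}{2}E_i(\kappa_\nu|\tau+t|)$ halves of $F_i$ and the $r_\nu E_i(\kappa_\nu(Z+\tau))$ term of $S_i$; this is a minor extension, not a different argument.
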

\begin{proof}~~~
For clarity let us assume that $r_\nu=0$. For the general case see \cite{FGOP}.
Applying the method of characteristics shows that
\begin{equation}\label{IntForm}
\begin{aligned}
I_\nu&(\tau,\mu)=\mu e^{-\frac{\kappa_\nu\tau}{\mu}}Q^+_\nu\mathbf 1_{\mu>0}+ |\mu|e^{-\frac{\kappa_\nu(Z-\tau)}{\vert\mu\vert}}Q^-_\nu\mathbf 1_{\mu<0}
\\
&+\mathbf 1_{\mu>0}\int_0^\tau e^{-\frac{\kappa_\nu(\tau-t)}{\mu}}\tfrac{\kappa_\nu}{\mu}{\cal S_\nu}(t)\d t
+\mathbf 1_{\mu<0}\int_\tau^Ze^{-\frac{\kappa_\nu(t-\tau)}{\vert \mu\vert }}\tfrac{\kappa_\nu}{|\mu|}{\cal S_\nu}(t)\d t\,.
\end{aligned}
\end{equation}
Therefore with $k=0$ or $k=2$,
\begin{equation}
\begin{aligned}&
\int_{-1}^1\mu^k I_\nu(\tau,\mu)\d\mu=\int_{-1}^1|\mu|^{k+1}\Big(e^{-\frac{\kappa_\nu\tau}{\mu}}Q^+_\nu\mathbf 1_{\mu>0}+e^{-\frac{\kappa_\nu(Z-\tau)}{\vert\mu\vert}}Q^-_\nu\mathbf 1_{\mu<0}\Big)\d\mu
\\
&+\int_{-1}^1|\mu|^{k-1}\Big(\mathbf 1_{\mu>0}\int_0^\tau e^{-\frac{\kappa_\nu(\tau-t)}{\mu}}\kappa_\nu{\cal S_\nu}(t)\d t
+\mathbf 1_{\mu<0}\int_\tau^Ze^{-\frac{\kappa_\nu(t-\tau)}{\vert \mu\vert }}\kappa_\nu{\cal S_\nu}(t)\d t\Big)\d\mu\,
\\&
=\int_{0}^1|\mu|^{k+1}e^{-\frac{\kappa_\nu\tau}{\mu}}Q^+_\nu\d\mu +
\int_{-1}^0|\mu|^{k+1}e^{-\frac{\kappa_\nu(Z-\tau)}{\vert\mu\vert}}Q^-_\nu\d\mu
\\
&+\int_{0}^1|\mu|^{k-1}\int_0^\tau e^{-\frac{\kappa_\nu(\tau-t)}{\mu}}\kappa_\nu{\cal S_\nu}(t)\d t
+\int_{-1}^0|\mu|^{k-1}\int_\tau^Ze^{-\frac{\kappa_\nu(t-\tau)}{\vert \mu\vert }}\kappa_\nu{\cal S_\nu}(t)\d t\d\mu
\\&
=\int_{0}^1|\mu|^{k+1}\left(e^{-\frac{\kappa_\nu\tau}{\mu}}Q^+_\nu
+ e^{-\frac{\kappa_\nu(Z-\tau)}{\vert\mu\vert}}Q^-_\nu\right)\d\mu
\\
&+\int_0^\tau \int_{0}^1|\mu|^{k-1}e^{-\frac{\kappa_\nu(\tau-t)}{\mu}}\kappa_\nu
({\cal R}_\nu(t) + {\cal P}_\nu(t)\mu^2)\d\mu\d t
\\&
+\int_\tau^Z\int_{0}^1|\mu|^{k-1}e^{-\frac{\kappa_\nu(t-\tau)}{\vert \mu\vert }}\kappa_\nu
({\cal R}_\nu(t) + {\cal P}_\nu(t)\mu^2)\d\mu\d t
\\&
= E_{k+3}(\kappa_\nu\tau)Q^+_\nu + E_{k+3}(\kappa_\nu(Z-\tau))Q^- 
\\&
+ \int_0^\tau E_{k+1}(\kappa_\nu(\tau-t))R_\nu(t)\d t  
+ \int_\tau^Z E_{k+1}(\kappa_\nu(t-\tau))R_\nu(t)\d t
\\&
+ \int_0^\tau E_{k+3}(\kappa_\nu(\tau-t))P_\nu(t)\d t 
+ \int_\tau^Z E_{k+3}(\kappa_\nu(t-\tau))P_\nu(t)\d t.
\end{aligned}
\end{equation}
For $k=1$, the same computation gives
\begin{equation}
\begin{aligned}&
\int_{-1}^1\mu I_\nu(\tau,\mu)\d\mu=\int_{-1}^1\mu^2\Big(e^{-\frac{\kappa_\nu\tau}{\mu}}Q^+_\nu\mathbf 1_{\mu>0}-e^{-\frac{\kappa_\nu(Z-\tau)}{\vert\mu\vert}}Q^-_\nu\mathbf 1_{\mu<0}\Big)\d\mu
\\
&+\int_{-1}^1\Big(\mathbf 1_{\mu>0}\int_0^\tau e^{-\frac{\kappa_\nu(\tau-t)}{\mu}}\kappa_\nu{\cal S_\nu}(t)\d t
-\mathbf 1_{\mu<0}\int_\tau^Ze^{-\frac{\kappa_\nu(t-\tau)}{\vert \mu\vert }}\kappa_\nu{\cal S_\nu}(t)\d t\Big)\d\mu\,
\\&
=\int_{0}^1\mu^{2}e^{-\frac{\kappa_\nu\tau}{\mu}}Q^+_\nu\d\mu -
\int_{-1}^0\mu^{2}e^{-\frac{\kappa_\nu(Z-\tau)}{\vert\mu\vert}}Q^-_\nu\d\mu
\\
&+\int_{0}^1\int_0^\tau e^{-\frac{\kappa_\nu(\tau-t)}{\mu}}\kappa_\nu{\cal S_\nu}(t)\d t
-\int_{-1}^0\int_\tau^Ze^{-\frac{\kappa_\nu(t-\tau)}{\vert \mu\vert }}\kappa_\nu{\cal S_\nu}(t)\d t\d\mu
\end{aligned}
\end{equation}
\begin{equation}
\begin{aligned}&
=\int_{0}^1\mu^{2}\left(e^{-\frac{\kappa_\nu\tau}{\mu}}Q^+_\nu- e^{-\frac{\kappa_\nu(Z-\tau)}{\vert\mu\vert}}Q^-_\nu\right)\d\mu
\\
&+\int_0^\tau \int_{0}^1 e^{-\frac{\kappa_\nu(\tau-t)}{\mu}}\kappa_\nu
({\cal R}_\nu(t) + {\cal P}_\nu(t)\mu^2)\d\mu\d t
\\&
-\int_\tau^Z\int_{0}^1e^{-\frac{\kappa_\nu(t-\tau)}{\vert \mu\vert }}\kappa_\nu
({\cal R}_\nu(t) + {\cal P}_\nu(t)\mu^2)\d\mu\d t
\\&
= E_{4}(\kappa_\nu\tau)Q^+_\nu - E_{4}(\kappa_\nu(Z-\tau))Q^- 
\\&
+ \int_0^\tau E_{2}(\kappa_\nu(\tau-t)){\cal R}_\nu(t)\d t  
- \int_\tau^Z E_{2}(\kappa_\nu(t-\tau)){\cal R}_\nu(t)\d t
\\&
+ \int_0^\tau E_{4}(\kappa_\nu(\tau-t)){\cal P}_\nu(t)\d t 
- \int_\tau^Z E_{4}(\kappa_\nu(t-\tau)){\cal P}_\nu(t)\d t.
\end{aligned}
\end{equation}
\end{proof}

\subsection{Numerical Simulations}
At the top of the atmosphere, we are told, the solar radiation intensity is $340W/m^2$ but the atmosphere reflects 30\% of it and our RTE does not handle volumic reflection. At 10 or 12km the sunlight is diffused, so we apply an intensity proportional to $\vom_3$. Paris being at latitude $45^0$, the radiation is divided by $\sqrt{2}$ . Hence an effective number is $Q_{sun}=162W/m^2$. And yet a good portion is spent for the evaporation of water and into convection. Arbitrarily we keep only half $80W/m^2$.

Recall the scalings defined in \cite{FGOP3}.
The frequencies and the temperatures are scaled as follows (primes label scaled variables),
\[
\nu'=10^{-14}\nu,~~T'=10^{-14}\frac k{\hh}T=10^{-14}\frac{1.381\cdot 10^{-23}}{6.626\cdot 10^{-34}}T = \frac{T}{4798}.
\] 
Planck's function is written as
\[
B_0\frac{\nu'^3}{e^\frac{\nu'}{T'}-1}, ~~\hbox{ with } B_0=\frac{2\hh}{c^2}10^{42}=\frac{2\times 6.626\cdot 10^{-34}}{2.998^2\cdot 10^{16}}10^{42}= 1.4744\cdot 10^{-8}.
\]
Consequently the computed  intensity, $I'_\nu$, is the physical intensity divided by $B_0$.
The scaled Sun temperature is $T'_S=5800/4798=1.209$.  According to \eqref{jeq},
\[
\int_0^\infty Q^+_\nu=80=Q^S\int_0^\infty B_0\frac{\nu'^3}{e^\frac{\nu'}{T'_S}-1}(10^{14}\d\nu')
= Q^S 1.4744\cdot 10^{6}\frac{\pi^4}{15}1.209^4.
\]
This leads to $Q^S=4 \cdot 10^{-6}$.

Similarly Earth is at $T'_E=288/4798=0.06$ and emits about $350W/m^2$, but some is used for the evaporation of water so we have kept only  $300W/m^2$:
\[
300 = Q^E \int_0^\infty B_0\frac{\nu'^3}{e^\frac{\nu'}{T'_E}-1}(10^{14}\d\nu')
= Q^E 1.4744\cdot 10^{6}\frac{\pi^4}{15}0.06^4= Q^E 124.3.
\]
This leads to $Q^E=2.41 $.

 The air density is $1-\tfrac34 z$. The earth reflective albedo is set at $r_\nu=0.3$.  Isotropic scattering is set at $a_\nu=0.3$.

$\kappa_\nu$ is given by the Gemini experimental program.  
To measure the temperature perturbation due to an increase of \texttt{CO}$_2$ (resp \texttt{NO}$_x$)in the atmosphere, we set $\kappa_\nu=1$ in the range $(3/16,3/14)$ (resp. $(3/7,3/5)$.

\subsubsection{Isotropic Scattering}
Two tests were performed with the stratified 1D approximation, isotropic scattering, no cloud and Gemini $\kappa_\nu$ + Rayleigh scattering equal to $0.3$ for $\nu>3$ and $\tau>0.7$.
One computation is with an infrared source $Q^E$ at $\tau=0$ only (similar to the situation at night) and another with a sunlight source $Q^S$ at $\tau=Z$ only.

Results  are on Figures \ref{onedE}, \ref{onedS}. Computations are done with 3 different atmospheres.  One is conform to the Gemini measurements, the other two imitate an addition of \texttt{CO}$_2$ or methane.

One can observe a small additional greenhouse effect due to addition of \texttt{CO}$_2$ but a cooling effect at high altitude.  The opposite is observed for addition of \texttt{NO}$_x$.

\begin{figure}[h]
\begin{minipage}[b]{0.5\textwidth} 
\begin{center}
\begin{tikzpicture}[scale=0.75]
\begin{axis}[legend style={at={(1,1)},anchor=north east}, compat=1.3,
   xlabel= {Altitude},
  ylabel= {T}
  ]
\addplot[thick,solid,color=red,mark=none, mark size=1pt] table [x index=0, y index=1]{figE/temperature00.txt};
\addlegendentry{no GHG}
\addplot[thick,solid,color=blue,mark=none, mark size=1pt] table [x index=0, y index=1]{figE/temperature01.txt};
\addlegendentry{with \texttt{CO}$_2$}
\addplot[thick,solid,color=green,mark=none, mark size=1pt] table [x index=0, y index=1]{figE/temperature02.txt};
\addlegendentry{with \texttt{NO}$_x$}
\end{axis}
\end{tikzpicture}
\caption{\label{onedE} Temperature T versus altitude z for 3 values of $\nu\mapsto\kappa_\nu$.
Only the earth radiation is taken as a source. (night time).
}
\end{center}
\end{minipage}
\hskip0.2cm
\begin{minipage}[b]{0.45\textwidth} 
\begin{center}
\hskip-0.4cm
\begin{tikzpicture}[scale=0.75]
\begin{axis}[legend style={at={(1,1)},anchor=north east}, compat=1.3,
   ymax=-10,
   xlabel= {Altitude},
  ylabel= {T}
  ]
\addplot[thick,solid,color=red,mark=none, mark size=1pt] table [x index=0, y index=1]{figS/temperature00.txt};
\addlegendentry{no GHG}
\addplot[thick,solid,color=blue,mark=none, mark size=1pt] table [x index=0, y index=1]{figS/temperature01.txt};
\addlegendentry{with \texttt{CO}$_2$}
\addplot[thick,solid,color=green,mark=none, mark size=1pt] table [x index=0, y index=1]{figS/temperature02.txt};
\addlegendentry{with \texttt{NO}$_x$}
\end{axis}
\end{tikzpicture}
\caption{\label{onedS}  T vs $z$ for 3 values of $\nu\mapsto\kappa_\nu$.
Only  the Sun radiation is taken as a source.}
\end{center}
\end{minipage}
\end{figure}

\subsubsection{Anisotropic Scattering in a Cloud}
Next, still with the stratified approximation, a cloud is added between altitude $Z_m=7000$m and $Z_M=9000$m with anisotropic scattering coefficient $\beta=\tfrac{a_\nu}2$, $b=1,~ a_\nu=0.3\tfrac4{(Z_M-Z_m)^2}(z-Z_m)^+(Z_M-z)^+$.  
 Above $Z_M$ Rayleigh scattering $0.3 1_{\nu>3}$, replaces the anisotropic scaattering.
 
 Results are shown on Figure \ref{oned2}, \ref{oned4} and \ref{oned3}.
The strong effect of the cloud is shown on Figure \textcolor{red}{\ref{oned4}}. Figure \ref{oned3} shows how important it is to take all the details of $\kappa_\nu$ into account.

\begin{figure}[h]
\begin{minipage}[b]{0.5\textwidth} 
\begin{center}
\begin{tikzpicture}[scale=0.75]
\begin{axis}[legend style={at={(1,1)},anchor=north east}, compat=1.3,
   xlabel= {Altitude},
  ylabel= {T}
  ]
\addplot[thick,solid,color=red,mark=none, mark size=1pt] table [x index=0, y index=1]{fig2/temperature00.txt};
\addlegendentry{no GHG}
\addplot[thick,solid,color=blue,mark=none, mark size=1pt] table [x index=0, y index=1]{fig2/temperature01.txt};
\addlegendentry{with \texttt{CO}$_2$}
\addplot[thick,solid,color=green,mark=none, mark size=1pt] table [x index=0, y index=1]{fig2/temperature02.txt};
\addlegendentry{with \texttt{NO}$_x$}
\end{axis}
\end{tikzpicture}
\caption{\label{oned2} $T$ versus $z$ computed with a cloud}
\end{center}
\end{minipage}
\hskip0.2cm
\begin{minipage}[b]{0.5\textwidth} 
\begin{center}
\begin{tikzpicture}[scale=0.75]
\begin{axis}[legend style={at={(1,1)},anchor=north east}, compat=1.3,
   ymax=2,
   xlabel= {altitude},
  ylabel= {T}
  ]
\addplot[thick,solid,color=red,mark=none, mark size=1pt] table [x index=0, y index=1]{fig1/classeur.txt};
\addlegendentry{no GHG}
\addplot[thick,solid,color=blue,mark=none, mark size=1pt] table [x index=0, y index=2]{fig1/classeur.txt};
\addlegendentry{with \texttt{CO}$_2$}
\addplot[thick,solid,color=green,mark=none, mark size=1pt] table [x index=0, y index=3]{fig1/classeur.txt};
\addlegendentry{with \texttt{NO}$_x$}
\end{axis}
\end{tikzpicture}
\caption{\label{oned4} Difference T$_{nocloud}$  - T$_{cloud}$}
\end{center}
\end{minipage}
\end{figure}

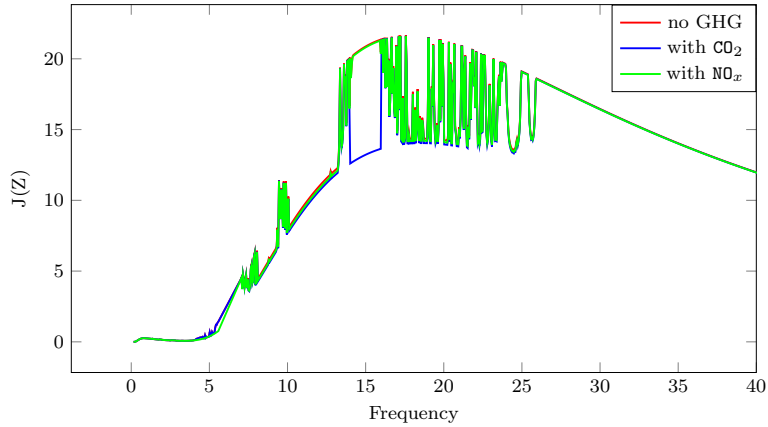
\begin{figure}[h]
\begin{center}
\begin{tikzpicture}[scale=0.9]
\begin{axis}[legend style={at={(1,1)},anchor=north east}, compat=1.3,
   xmax=40,
   xlabel= {Frequency},
  ylabel= {J(Z)},
  width=1.\textwidth,
        height=0.6\textwidth,
  ]
\addplot[thick,solid,color=red,mark=none, mark size=1pt] table [x index=0, y index=1]{fig1/imean00Z.txt};
\addlegendentry{no GHG}
\addplot[thick,solid,color=blue,mark=none, mark size=1pt] table [x index=0, y index=1]{fig1/imean01Z.txt};
\addlegendentry{with \texttt{CO}$_2$}
\addplot[thick,solid,color=green,mark=none, mark size=1pt] table [x index=0, y index=1]{fig1/imean02Z.txt};
\addlegendentry{with \texttt{NO}$_x$}
\end{axis}
\end{tikzpicture}
\caption{\label{oned3} Radiative intensity $J_\nu(Z)$ versus $\nu$ for the 3 $\nu\mapsto\kappa_\nu$ cases.}
\end{center}
\end{figure}

Finally, in Table \ref{radiative} total radiative intensities $\int_0^\infty J_\nu(\tau)$ are shown at $\tau=0$ and $\tau=Z$ in a variety of situation\textcolor{red}{s} to see the {effect} of clouds, scattering, earth albedo.

\begin{table}[htp]
\caption{Radiative intensities for the Earth radiations + Sunlight (first 2 {rows}) and the Sunlight only (last 2 {rows}) when all terms are present and when one of the terms is removed.}
\begin{center}
\begin{tabular}{|c|c|c|c|c|c|}
\hline
& everything & no cloud & no scattering & no earth albedo & no absorption\cr
\hline
$\tau=0$ & 9.52495 & 9.39105 & 9.52441 & 7.34548 & 7.21413\cr
$\tau=Z$ & 5.98738 & 5.99558 & 5.98622 & 4.93935 & 7.14319\cr
\hline
$\tau=0$ & 1.44484 & 1.43589 & 1.4431 & 1.51029 & 1.51029\cr
$\tau=Z$ & 1.91376 & 1.90286 & 1.91093 & 1.73976 & 1.51995\cr
\hline
\end{tabular}
\end{center}
\label{radiative}
\end{table}%

In the presence of \texttt{CO}$_2$ the total radiative energy at ground level (resp. $\tau=Z$), is $9.91316$ (resp. $5.72292$).  With \texttt{NO}$_x$ it is 9.55564  (resp. 5.8434).  These numbers must be compared with the first number on the left in the first (resp 2$^{nd}$) {row} in Table \ref{radiative}.

\begin{figure}[htbp]
\begin{center}
\begin{tikzpicture}[scale=0.75]
\begin{axis}[legend style={at={(1,1)},anchor=north east}, compat=1.3,
   xlabel= {Altitude},
  ylabel= {T}
  ]
\addplot[thick,dotted,color=red,mark=none, mark size=1pt] table [x index=0, y index=1]{fig1/temperature000.txt};
\addlegendentry{$\beta=0$}
\addplot[thick,solid,color=blue,mark=none, mark size=1pt] table [x index=0, y index=1]{fig1/temperature05.txt};
\addlegendentry{$\beta=0.50$}
\addplot[thick,dashed,color=green,mark=none, mark size=1pt] table [x index=0, y index=1]{fig1/temperature07.txt};
\addlegendentry{ $\beta=0.75$}
\end{axis}
\end{tikzpicture}
\caption{\label{oned7} $T$ versus $z$ computed with a cloud with anisotropic scattering of coefficient $\beta$.}
\end{center}
\end{figure}
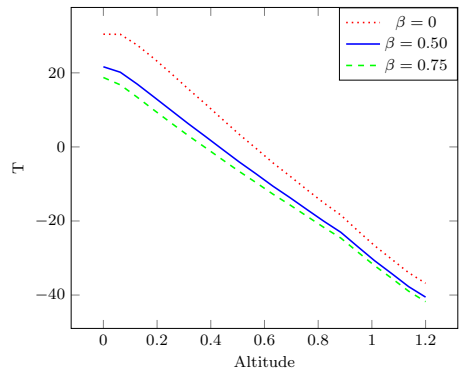

\section{A 3D formulation with a Stratified Part}
To implement in 3D the Earth albedo (reflective condition by the ground) in all generality is quite complicated. To simplify the computation we consider the reflection of the sunlight only and we use the linearity, with respect to the sources, of the equation for $I_\nu$, for a fixed temperature field.

For a portion $\Omega$ of atmosphere between the ground $\vg=(\vx_1,\vx_2,\vx_3=g(\vx_1,\vx_2))$, $g\ge 0$, and the lower stratosphere $\vx_3=Z$, consider
\begin{equation}
\begin{aligned}
\label{oneaz}&
\vom\cdot\nablav_\vx {I}_\nu+\rho\bar\kappa_\nu a_\nu\left[{I}_\nu-{\frac{1}{4\pi}\int_{\S^2}} p_\nu(\omegav,\omegav'){I}_\nu(\omegav'){\d}\omega'\right]
\\&
\hskip6cm = \rho\bar\kappa_\nu(1-a_\nu) [B_\nu(T)-{I}_\nu],
\\  &
I_\nu|_{\vg, \vom\cdot\vn<0} = Q^E (\vom\cdot\vn)^- B_\nu(T_E), \quad
I_\nu|_{Z,\vom_3<0} = Q^S \vom_3^- B_\nu(T_S).
\end{aligned}
\end{equation}
The solution is $I_\nu=I_\nu^E+I_\nu^S$ with $I_\nu^E$ computed with the condition $I_\nu|_{\vg}$ only and the source term in $T$, and $I_\nu^S$ computed with $I_\nu|_{Z}$ only and $T=0$:
\begin{equation}
\begin{aligned}
\label{onea4}&
\left\{\begin{matrix}\omegav\cdot\nablav_\vx I_\nu^S+\rho\bar\kappa_\nu a_\nu\left[I_\nu^S-{\frac{1}{4\pi}\int_{\S^2}} p_\nu(\omegav,\omegav')I_\nu^S(\omegav'){\d}\omega'\right]
+  \rho\bar\kappa_\nu(1-a_\nu) I_\nu^S = 0,
\cr
I^S_\nu|_{Z,\vom_3<0} = Q^S \vom_3^- B_\nu(T_S),\qquad I^S_\nu|_{0,\vom_3>0} = 0,~~~~~~~~~~~~~~~~~
\end{matrix}\right.
\\ &
\left\{\begin{matrix} \omegav\cdot\nablav_\vx I_\nu^E+\rho\bar\kappa_\nu a_\nu\left[I_\nu^E-{\frac{1}{4\pi}\int_{\S^2}} p_\nu(\omegav,\omegav')I_\nu^E(\omegav'){\d}\omega'\right]~~~~~~~~~~~~~~~~~~~
\cr
\hskip6cm = \rho\bar\kappa_\nu(1-a_\nu) [B_\nu(T)-I_\nu^E],
\cr
I^E_\nu|_{g, \vom\cdot\vn<0} = Q^E (\vom\cdot\vn)^- B_\nu(T_E) - I^S_\nu|_{\vg,\vom}, \quad I^E_\nu|_{Z,\vom_3<0}  =0.
\end{matrix}\right.
\end{aligned}
\end{equation}

For $I_\nu^S$ the stratified approximation is justified. Furthermore, it can be precomputed beforehand.  

To account for the fact that a fraction $r_\nu(\vx)$ of the sunlight is reflected by the Earth surface, we consider a partial reflective  condition  
\[
I^E_\nu|_{\vg,\vom\cdot\vn<0} = -I^S_\nu|_{\vg,\vom} + Q^E (\vom\cdot\vn)^- B_\nu(T_E) + r_\nu(\vg) I_\nu^S|_{\vg,\vom-2(\vom\cdot\vn)\vn}\,,
\]
but $I^S_\nu|_{\vg,\vom}\approx 0$ when $\vom\cdot\vn<0$, so, neglecting scattering, $a_\nu=0$ (see also \cite{pekka})
\[
I_\nu^S(\vx,\vom) = \vom_3^- Q^S B_\nu(T_S)\e^{-\kappa_\nu\frac{Z-\tau(\vx_3)}{\vom_3^-}}.
\]
Therefore, with $\vom\cdot\vn<0$ and $\vx$ on the ground,
\begin{equation}
\begin{aligned}
-I^S_\nu|_{\vg,\vom} &+ r_\nu(\vx) I_\nu^S|_{\vg,\vom-2(\vom\cdot\vn)\vn}
\\ &
= (\vom_3-2(\vom\cdot\vn)\vn_3)^- Q^SB_\nu(T_S) r_\nu(\vg)\e^{-\frac{\kappa_\nu (Z-\tau(g))}{(\vom_3-2(\vom\cdot\vn)\vn_3)^-}}.
\end{aligned}
\end{equation}
For a flat ground, it is equal to $\vom_3^+Q^SB_\nu(T_S) r_\nu(\vg)\e^{-\frac{\kappa_\nu Z}{\vom_3^+}}$ and its angle average on $\SS$ is $\frac12 Q^SB_\nu(T_S) r_\nu(\vg) E_3(\kappa_\nu Z)$ so that it makes sense to solve the problem with the following conditions,
\begin{equation}
\begin{aligned}
I_\nu|_{\vg,\vom\cdot\vn<0} &=   (\vom\cdot\vn)^- \left(Q^E B_\nu(T_E) +   Q^S B_\nu(T_S) 2 r_\nu(\vg) E_3(\kappa_\nu Z)\right), 
\\
 I_\nu|_{Z,\vom_3<0} & =
(\vom\cdot\vn)^- Q^S  B_\nu(T_S) .
\end{aligned}
\end{equation}
The valley of Chamonix is considered in the same physical conditions as above, i.e. with a cloud between 7000m and 9000m. However we have not implemented the anisotropic scattering yet, so $\beta=0$.

All 4 vertical boundaries which limit the domain $\Omega$ are reflective.
Results are shown on Figures \ref{chamT} and \ref{fig3D}.
\begin{figure}[h]
\begin{center}
\begin{tikzpicture}[scale=0.75]
\begin{axis}[legend style={at={(1,1)},anchor=north east}, compat=1.3,
   xmin=0.09,
   xlabel= {Altitude},
  ylabel= {T}
  ]
\addplot[thick,solid,color=red,mark=none, mark size=1pt] table [x index=0, y index=1]{fig1/noonScloudScatttempe.txt};
\addlegendentry{grey case $\kappa_\nu=0.45$}
\end{axis}
\end{tikzpicture}
\caption{\label{chamT} Temperature above the city of Chamonix versus altitude in a grey case. The growth of $T$ from the ground up is partially due to quadrature errors, partially to the parabolic effect of mountains.}
\end{center}
\end{figure}
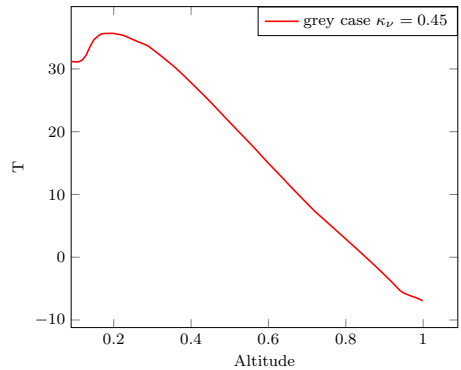

\begin{figure}[htbp]
\begin{center}
\includegraphics[width=13cm]{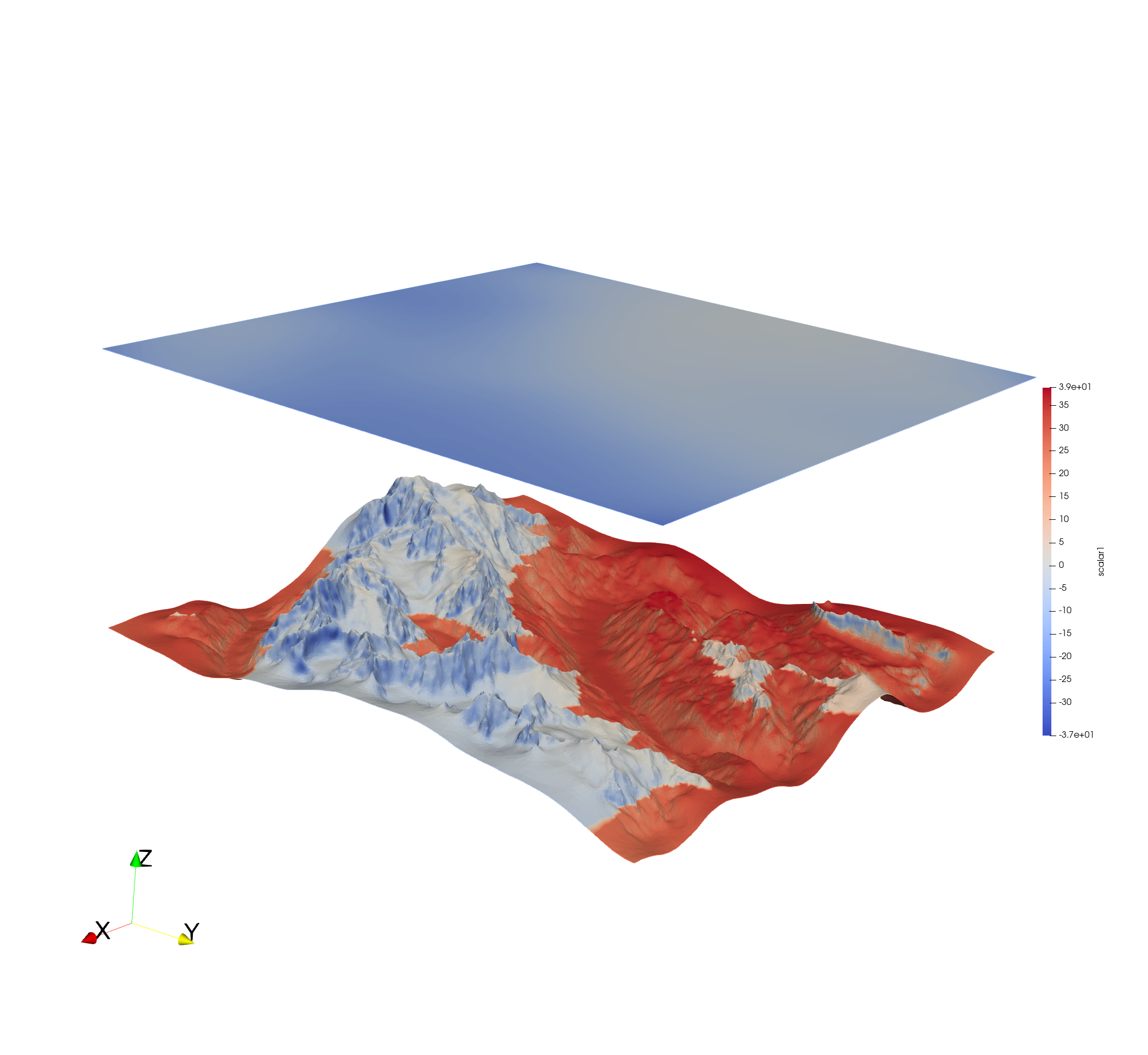}

\caption{Ground and high altitude temperature in an atmosphere receiving sunlight and Earth infrared radiation. In addition 0.3\% of the sunlight is reflected in the normal direction of the ground and 0.7\% if the ground is covered by snow. }
\label{fig3D}
\end{center}
\end{figure}

\section*{Conclusion}

The integro-differential formulation of the RTE and its solution by iterations on the source has been extended here to handle anisotropic scattering.

The iterative part of the method is $O(N\ln N)$, thanks to an efficient use of ${\cal H}$-matrices.

The precision is good enough to evaluate the effect of sensitive parameters for the study of contrails.  Most of the time the stratified 1D approximation should suffice, but in complex cases with high relief the 3D formulation is needed.

\bibliographystyle{siamplain}
\bibliography{references}

\begin{thebibliography}{10}

\bibitem{pekka}
{\sc C.~Bardos, F.~Golse, and O.~Pironneau}, {\em Remarks on the Radiative
  Transfer Equations for Climatology}, vol.~Impact of Scientific Computing on
  Science and Society of series Computational Methods in Applied Sciences.,
  Springer Nature, 2023.

\bibitem{CBOP}
{\sc C.~Bardos and O.~Pironneau}, {\em Radiative transfer for the greenhouse
  effect}, SeMA J. Springer, 79 (2021), pp.~489--525.
\newblock Special Issue ICIAM 2019.

\bibitem{beb}
{\sc M.~Bebendorf}, {\em Approximation of boundary element matrices}, 86
  (2000), pp.~565--589.

\bibitem{beben}
{\sc M.~Bebendorf}, {\em Hierarchical Matrices}, Lecture notes in science and
  engineering, Springer, Heidelberg, 2008.

\bibitem{I3RC}
{\sc R.~Cahalan, L.~Oreopoulos, A.~Marshak, K.~Evans, A.~Davis, and et~al.},
  {\em Bringing together the most advanced radiative transfer tools for cloudy
  atmospheres}, American Meteorological Society, 86 (2005), pp.~1275--1294.

\bibitem{libtran}
{\sc C.~Emde, R.~Buras-Schnell, A.~Kylling, B.~Mayer, J.~Gasteiger, U.~Hamann,
  J.~Kylling, B.~Richter, C.~Pause, T.~Dowling, , and L.~Bugliaro}, {\em The
  libradtran software package for radiative transfer calculations}, Geosci.
  Model Dev, 9 (2016), pp.~1647--1672.

\bibitem{JCP}
{\sc F.~Golse, F.~Hecht, O.~Pironneau, D.~Smetz, and P.-H. Tournier}, {\em
  Radiative transfer for variable 3d atmospheres}, J. Comp. Physics, 475
  (2023), pp.~1--19.

\bibitem{DIA}
{\sc F.~Golse and O.~Pironneau}, {\em Radiative transfer in a fluid}, RACSAM,
  Springer, Volume dedicated to I. Diaz (2022),
  pp.~https://doi.org/10.1007/s13398--022--01362--x.

\bibitem{FGOP}
{\sc F.~{Golse} and O.~{Pironneau}}, {\em Stratified radiative transfer for
  multidimensional fluids}, Comptes Rendus. M\'ecanique, 350 (2022), pp.~1--15.
\newblock DOI : 10.5802/crmeca.136.

\bibitem{FGOP3}
{\sc F.~Golse and O.~Pironneau}, {\em Stratified radiative transfer in a fluid
  and numerical applications to earth science}, SIAM Journal on Numerical
  Analysis, 60 (2022), pp.~2963--3000,
  \url{https://doi.org/10.1137/21M1459009},
  \url{https://doi.org/10.1137/21M1459009},
  \url{https://arxiv.org/abs/https://doi.org/10.1137/21M1459009}.

\bibitem{hack}
{\sc W.~Hackbusch}, {\em A sparse matrix arithmetic based on h-matrices. part
  i: Introduction to h-matrices}, Computing, 62 (1999), pp.~89--108.

\bibitem{lee}
{\sc D.~S. Lee, D.~W. Fahey, A.~Skowron, M.~R. Allen, U.~Burkhardt, and
  Q.~Chen}, {\em The contribution of global aviation to anthropo- genic climate
  forcing for 2000 to 2018.}, Atmospheric Environment, 244 (2021), p.~117834.

\bibitem{Sausen}
{\sc S.~Marquart, M.~Ponater, F.~Mager, and R.~Sausen}, {\em Future development
  of contrail cover, optical depth, and radiative forcing: impacts of
  increasing air traffic and climate change}, J. Climate, 16 (2003),
  pp.~2890--2904.

\bibitem{roberto}
{\sc R.~Paoli and K.~Shariff}, {\em Contrail modeling and simulation}, Annual
  Rev. Fluid Mech, 48 (2016), pp.~393--427.

\bibitem{JCP2}
{\sc O.~Pironneau and P.-H. Tournier}, {\em Reflective conditions for radiative
  transfer in integral form with h-matrices}.
\newblock submitted to JCP 2023.

\bibitem{POM}
{\sc G.~Pomraning}, {\em The equations of Radiation Hydrodynamics}, Pergamon
  Press, NY, 1973.

\bibitem{POR}
{\sc M.~Porzio and O.~Lopez-Pouso}, {\em Application of accretive operators
  theory to evolutive combined conduction, convection and radiation}, Rev. Mat.
  Iberoamericana, 20 (2004), pp.~257--275.

\end{thebibliography}

\end{document}